\DeclarePairedDelimiter\abs{\lvert}{\rvert}
\DeclareMathOperator{\Span}{span}
\DeclareMathOperator{\supp}{supp}
\DeclareMathOperator{\cl}{cl}
\DeclareMathOperator{\card}{card}
\renewcommand{\leq}{\leqslant}
\renewcommand{\geq}{\geqslant}
\renewcommand{\subsetneq}{\varsubsetneq}
\newcommand{\Q}{\mathbb{Q}}
\newcommand{\C}{\mathbb{C}}
\newcommand{\N}{\mathbb{N}}
\newcommand{\D}{\mathbb{D}}
\theoremstyle{plain}
\newtheorem{theorem}{Theorem}[section]
\newtheorem{proposition}[theorem]{Proposition}
\newtheorem{lemma}[theorem]{Lemma}
\theoremstyle{definition}
\newtheorem{definition}[theorem]{Definition}
\theoremstyle{remark}
\newtheorem{remark}[theorem]{Remark}
\crefname{proposition}{Proposition}{Propositions}
\crefname{lemma}{Lemma}{Lemmata}
\crefname{theorem}{Theorem}{Theorems}
\crefname{equation}{}{}
\crefname{remark}{Remark}{Remarks}
\let\oldabs\abs
\def\abs{\@ifstar{\oldabs}{\oldabs*}}
\title{\textbf{Uncountably infinite algebraic genericity and spaceability for sequence spaces}}
\author{C. A. Konidas}
\date{\vspace{-5ex}}
\begin{document}
\pagestyle{myheadings}
\markboth{Uncountably infinite algebraic genericity and spaceability for sequence spaces}{C. A. Konidas}
\maketitle
\begin{abstract}
\noindent Let $X$ be a topological vector space of complex-valued sequences and $Y$ be a subset of $X$.
We provide conditions for $X \setminus Y \cup \{0\}$ to contain uncountably infinitely many linearly independent dense vector subspaces of $X$.
We also provide conditions for $X \setminus Y \cup \{0\}$ to contain uncountably infinitely many linearly independent closed infinite-dimensional vector subspaces of $X$.
We apply these results to a chain of spaces containing the $\ell^p$ spaces.
\end{abstract}
{\em AMS classification numbers}: 15A03, 46A45, 46A11, 46A13\smallskip\\
{\em Keywords and phrases}: Algebraic genericity, dense lineability, spaceability, $\ell^p$ spaces

\section{Introduction}

Let $E$ be a topological vector space and $M$ be a subset of $E$.
\begin{itemize}

\item We say that $M$ is \emph{algebraically generic} if and only if there exists a dense vector subspace of $E$ contained in the set $M \cup \{0\}$. In particular, we say that $M$ is \emph{maximal algebraically generic} if and only if there exists a dense vector subspace of $E$ contained in the set $M \cup \{0\}$ with dimension equal to the dimension of $E$.

\item We say that $M$ is \emph{spaceable} if and only if there exists a closed infinite-dimensional vector subspace of $E$ contained in the set $M \cup \{0\}$.

\end{itemize}
The notions of algebraic genericity and spaceability are both concerned with finding a vector subspace of $E$ contained in the set $M \cup \{0\}$ satisfying a certain property.
In this paper we extend these notions by searching for multiple subspaces of $E$ contained in the set $M \cup \{0\}$ that are linearly independent and satisfy the respective property.

A family $(F^k)_{k \in I}$ of subspaces of a vector space is said to be linearly independent if and only if for every $J$ finite subset of $I$ and $v_{k} \in F^k$ for $k \in J$ such that $\sum_{k \in J}{v_j} = 0$ it follows that $v_k = 0$ for every $k \in J$.

We consider the chain
\begin{equation*}
\label{chain}
c_{00} \subsetneq A^{\infty}(\D) \subsetneq \bigcap_{p>0}{\ell^p} \subsetneq \ell^a \subsetneq \bigcap_{q>a}{\ell^q} \subsetneq \ell^b \subsetneq \bigcap_{p>b}{\ell^p} \subsetneq c_0 \subsetneq \ell^{\infty} \subsetneq H(\D) \subsetneq \C^{\N_0} \tag{$\star$}
\end{equation*}
of sequence spaces where $a,b \in (0,\infty)$ with $a < b$.
For completeness we provide the definitions of these spaces and the metric with which we consider each space in the next section.
This chain has been studied in \cite{BIG}, while a smaller version of it has been studied in \cite{NEST, BERNAL}.
In particular, the following two theorems were proven in \cite{BIG}.

\begin{theorem}
Let $Y,X$ be spaces of the chain \cref{chain} such that $Y \subsetneq X$ and $X \neq \ell^{\infty}$.
Then there exists a dense vector subspace of $X$ contained in the set $X \setminus Y \cup \{0\}$, that is, $X \setminus Y$ is algebraically generic in $X$.
\end{theorem}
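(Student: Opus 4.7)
The plan is to build, by induction, a sequence $(v_n)_{n \in \N}$ in $X$ whose linear span $V = \Span\{v_n : n \in \N\}$ is dense in $X$ and satisfies $V \cap Y = \{0\}$; then $V$ is a dense vector subspace of $X$ contained in $(X \setminus Y) \cup \{0\}$, as required.

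Every space in the chain \cref{chain} other than $\ell^{\infty}$ is a separable metrizable topological vector space. Since $X \neq \ell^{\infty}$, I would fix a metric $d$ inducing the topology of $X$ together with a countable sequence $(x_n)_{n \in \N}$ in which every element of some fixed countable dense subset of $X$ is listed infinitely often.

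The induction maintains $d(v_n, x_n) < 1/n$ and $V_n \cap Y = \{0\}$, where $V_n := \Span\{v_1, \ldots, v_n\}$. The algebraic observation at the heart of the step is that a nontrivial combination $\sum_{i=1}^{n+1} \alpha_i v_i$ lying in $Y$ must have $\alpha_{n+1} \neq 0$ by the inductive hypothesis, and that this membership is then equivalent to $v_{n+1} \in Y + V_n$. Hence it suffices to select $v_{n+1} \in X \setminus (Y + V_n)$ with $d(v_{n+1}, x_{n+1}) < 1/(n+1)$. Given any $w \in X \setminus (Y + V_n)$, the affine line $\{x_{n+1} + \lambda w : \lambda \in \C\}$ meets the vector subspace $Y + V_n$ in at most one point, so a sufficiently small nonzero $\lambda$ produces the desired $v_{n+1}$. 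Density of $V$ then follows because the repeated enumeration yields indices $n_k \to \infty$ with $x_{n_k} \to x$ for any target $x \in X$, whence $v_{n_k} \to x$ by the bound $d(v_{n_k}, x_{n_k}) < 1/n_k$.

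The main obstacle is to guarantee, at every stage, that $Y + V_n$ is a proper subspace of $X$, equivalently that $\dim(X/Y) = \infty$. This must be verified pair by pair for each $Y \subsetneq X$ in \cref{chain}: it suffices to exhibit infinitely many vectors in $X$ that are linearly independent modulo $Y$, since then one of them must have image in $X/Y$ avoiding the finite-dimensional image of $V_n$, providing the required $w$. Such infinite families can be produced by standard constructions of sequences with disjoint supports and carefully tuned decay rates for the $\ell^p$-type gaps, and by elementary arguments on Taylor coefficients for the gaps involving $c_{00}$, $A^{\infty}(\D)$, $H(\D)$ and $\C^{\N_0}$.
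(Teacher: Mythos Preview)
Your argument is correct and is a genuinely different route from the paper's. The paper (via \cref{lem:alggen} and \cref{prop:agassumptions}) builds the dense subspace \emph{explicitly}: it takes a dense sequence $(x_j)$ in $c_{00}$, picks elements $y_j \in X\setminus Y$ with pairwise disjoint supports (one in each piece of a partition of $\N_0$ into infinite sets, using condition \textit{(ii)}), and sets $f_j = x_j + c_j y_j$ for small scalars $c_j$. The verification that a nonzero combination $\sum t_j f_j$ lies outside $Y$ is a direct support computation: restricting to a suitable set isolates a single term $t_{j_0} c_{j_0} y_{j_0}$, and condition \textit{(iv)} (closure of $Y$ under coordinate restrictions) forces $y_{j_0}\in Y$, a contradiction. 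Your approach, by contrast, is the abstract ``infinite codimension'' argument: you only use that $Y+V_n \subsetneq X$ at each step and pick $v_{n+1}$ on an affine line missing $Y+V_n$.

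What each buys: your method is shorter and applies verbatim to any separable metrizable topological vector space $X$ and any subspace $Y$ of infinite codimension, with no reference to sequence-space structure. The paper's explicit construction is heavier for this single statement, but it is exactly what is needed for the paper's actual goals: the disjoint-support scaffolding, combined with an almost-disjoint family of index sets, upgrades immediately to $\mathfrak{c}$ many linearly independent dense subspaces whose joint span still avoids $Y$, and in particular to maximal algebraic genericity. Your inductive scheme does not obviously yield those refinements. Note also that the ingredient you leave as ``pair by pair'' verification---that $\dim(X/Y)=\infty$---is precisely what the paper's hypotheses \textit{(ii)} and \textit{(iv)} give in one stroke: take $y_j\in X\setminus Y$ supported in pairwise disjoint infinite sets $A_j$; if $\sum c_j y_j\in Y$ then restricting to $A_j$ via \textit{(iv)} gives $c_j y_j\in Y$, hence $c_j=0$.
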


\begin{theorem}
Let $Y,X$ be spaces of the chain \cref{chain} such that $Y \subsetneq X$.
Then there exists a closed infinite-dimensional vector subspace of $X$ contained in the set $X \setminus Y \cup \{0\}$, that is, $X \setminus Y$ is spaceable in $X$.
\end{theorem}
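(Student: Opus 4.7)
My plan is to use a disjoint-supports construction, valid uniformly across all pairs $(Y, X)$ in the chain with $Y \subsetneq X$. First, I would fix a partition of $\N_{0}$ into pairwise disjoint infinite subsets $(B_{n})_{n \in \N}$ and enumerate each $B_{n} = \{b_{n,1} < b_{n,2} < \cdots\}$. For every pair I would exhibit an explicit ``model'' sequence $f = (f_{k})_{k \geq 1}$ such that the sequence $x^{(n)}$ defined by $x^{(n)}_{b_{n,k}} = f_{k}$ and $0$ elsewhere lies in $X$ but not in $Y$; model sequences of this type are standard logarithmic modifications separating adjacent levels of the chain (for instance $f_{k} = 1/(k \log^{2} k)^{1/a}$ for the pair $\bigcap_{p>0}\ell^{p} \subsetneq \ell^{a}$). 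The crucial structural ingredient is that every space $Z$ in the chain is \emph{zeroing-invariant}, meaning that if $u \in Z$ and $v$ is obtained from $u$ by zeroing any set of coordinates then $v \in Z$.

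I would then let $F = \overline{\Span}\{x^{(n)} : n \in \N\}$, the closure being taken in the topology of $X$. Since the $x^{(n)}$ have pairwise disjoint supports they are linearly independent, so $F$ is infinite-dimensional; closedness is built in. To show $F \cap Y = \{0\}$, take $z \in F \cap Y$. The topology of every $X$ in the chain is finer than pointwise convergence on $\C^{\N_{0}}$, so if $z$ is a limit of finite combinations of the $x^{(n)}$, then for each $n$ there is a scalar $c_{n}$ with $z|_{B_{n}} = c_{n} x^{(n)}$. Applying zeroing-invariance of $Y$ to $z$, one gets $c_{n} x^{(n)} \in Y$, and since $x^{(n)} \notin Y$ this forces $c_{n} = 0$ for every $n$. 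Hence $z = 0$ and $F \setminus \{0\} \subseteq X \setminus Y$, as required.

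The main obstacle is verifying zeroing-invariance for the non-$\ell^{p}$ members of the chain. For the $\ell^{p}$-like spaces, their countable intersections, $c_{00}$, $c_{0}$, $\ell^{\infty}$ and $\C^{\N_{0}}$, the property is immediate from the coordinatewise nature of the defining conditions. For $H(\D)$, identified with Taylor coefficient sequences satisfying $\limsup_{n} |a_{n}|^{1/n} \leq 1$, both zeroing coefficients and shifting nonzero coefficients to larger indices can only decrease the $\limsup$, so the property is clear. The delicate case is $A^{\infty}(\D)$, whose natural definition involves uniform control of all derivatives on $\overline{\D}$; here I would invoke the coefficient characterization of $A^{\infty}(\D)$ as the set of $(a_{n})$ with $|a_{n}| = O(n^{-m})$ for every $m \in \N$, after which zeroing-invariance (and the placement of the model sequence on a sparser index set $B_{n}$) is immediate. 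Once these hereditary properties are secured, the disjoint-supports argument above dispatches every pair $(Y,X)$ uniformly.
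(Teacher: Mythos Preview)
Your proposal is correct and follows essentially the same route as the paper's Lemma~4.1 (specialized to a single index $k$): disjointly supported elements of $X\setminus Y$, closure of their span, pointwise convergence to identify $z\mathds{1}_{B_n}$ as a scalar multiple of $x^{(n)}$, and zeroing-invariance of $Y$ to force that scalar to vanish. The only cosmetic difference is that you build explicit model sequences for each pair, whereas the paper abstracts their existence as hypothesis~\textit{(ii)} and defers the case-by-case verification (including the coefficient characterizations of $H(\D)$ and $A^{\infty}(\D)$ you mention) to \cite{BIG}.
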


We improve these results by proving the two theorems that follow, where $\mathfrak{c}$ denotes the cardinality of the continuum.

\begin{theorem}
Let $Y,X$ be spaces of the chain \cref{chain} such that $Y \subsetneq X$ and $X \neq \ell^{\infty}$.
Then there exist a set $I$ with $\card(I) = \mathfrak{c}$ and a linearly independent family $(F^k)_{k \in I}$ of dense vector subspaces of $X$ such that the vector subspace generated by the set $\bigcup_{k \in I}{F^k}$ is contained in $X \setminus Y \cup \{0\}$.
\end{theorem}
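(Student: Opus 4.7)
My plan is to deduce the theorem from a general abstract lemma and then verify its hypotheses for each pair $(Y, X)$ of the chain \cref{chain} with $X \neq \ell^\infty$. The abstract lemma I would aim to prove is the following: if $X$ admits a countable subset $(q_n)_{n \in \N} \subseteq c_{00}$ whose linear span is dense in $X$, and a doubly indexed family $(v_{n,m})_{(n,m) \in \N \times \N}$ in $X \setminus Y$ with pairwise disjoint supports, all disjoint from $\bigcup_n \supp(q_n)$, satisfying (i) $\|v_{n,m}\|_X$ decays rapidly enough (say $\leq 2^{-n-m}$ in a translation-invariant metric) that every series $\sum_{n,m} \beta_{n,m} v_{n,m}$ with coefficients of at most geometric growth converges in $X$, and (ii) any such convergent series lies in $Y$ only when all $\beta_{n,m}$ vanish, then $X \setminus Y \cup \{0\}$ contains a linearly independent family of $\mathfrak{c}$ many dense vector subspaces of $X$.

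For the construction I would take $I := \bigl\{\alpha \in \R : 0 < \alpha < \tfrac{1}{2}\bigr\}$ (which has cardinality $\mathfrak{c}$), set
\[
x_\alpha^{(n)} := \sum_{m=1}^{\infty} \alpha^m v_{n,m},
\]
and define
\[
F^\alpha := \Span \bigl\{\, q_n + x_\alpha^{(n)} : n \in \N \,\bigr\}, \qquad \alpha \in I.
\]
By enumerating $(q_n)$ so that each chosen dense vector appears for arbitrarily large indices, and observing that $\|x_\alpha^{(n)}\|_X \to 0$ as $n \to \infty$, each spanning set $\{q_n + x_\alpha^{(n)} : n \in \N\}$ is dense in $X$, so each $F^\alpha$ is a dense subspace.

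Both the linear independence of the family $(F^\alpha)_{\alpha \in I}$ and the inclusion $\Span \bigl(\bigcup_{\alpha} F^\alpha\bigr) \subseteq X \setminus Y \cup \{0\}$ should follow from a single computation. For a finite $J \subseteq I$ and scalars $(c_n^\alpha)$ with only finitely many nonzero, the element $w := \sum_{\alpha \in J} \sum_n c_n^\alpha \bigl(q_n + x_\alpha^{(n)}\bigr)$ decomposes as a \emph{$q$-part} $\sum_n \bigl(\sum_\alpha c_n^\alpha\bigr) q_n \in c_{00} \subseteq Y$, supported on $\bigcup_n \supp(q_n)$, plus a \emph{$v$-part} $\sum_{n,m} \bigl(\sum_\alpha c_n^\alpha \alpha^m\bigr) v_{n,m}$, supported on the disjoint union of the $\supp(v_{n,m})$. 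Assuming $w \in Y$, hypothesis (ii) forces every coefficient $\sum_\alpha c_n^\alpha \alpha^m$ to vanish, and a Vandermonde argument in the distinct nonzero $\alpha \in J$ then gives $c_n^\alpha = 0$ for every $n$ and $\alpha$, whence $w = 0$. Specialising to $w = 0$ yields the linear independence of the family, and the general case yields the stated inclusion.

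The hard part will be constructing the family $(v_{n,m})$ satisfying (i) and (ii) uniformly across the chain. For pairs of $\ell^p$ spaces, standard sparse-bump constructions with coefficients $\sim k^{-\gamma}$ on disjoint infinite blocks far out in $\N_0$, with $\gamma$ chosen in the appropriate interval, work directly; for the intersection spaces $\bigcap_{p>a} \ell^p$ the exponent $\gamma$ must be placed at the correct threshold so that each bump lies in $X$ but not in $Y$; for the transitions involving $A^\infty(\D)$, $H(\D)$, and $\C^{\N_0}$ the constructions must be adapted to the corresponding topologies (e.g., coefficient growth rates tailored to $H(\D)$). The building blocks underlying the proof of Theorem~1.1 in \cite{BIG} should already furnish such vectors, and it remains to organize them into a doubly indexed family with the disjoint supports and summable norms required above.
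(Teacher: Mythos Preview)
Your Vandermonde-based strategy for manufacturing $\mathfrak{c}$ many linearly independent dense subspaces is a genuinely different route from the paper's. The paper instead fixes an almost disjoint family $(A^k)_{k\in I}$ of infinite subsets of $\N_0$ with $\card(I)=\mathfrak{c}$, partitions each $A^k$ into infinitely many infinite pieces $A^k_j$, picks $y^k_j\in X\setminus Y$ supported in $A^k_j$, and sets $F^k=\Span\{x_j+c^k_j y^k_j:j\in\N\}$ for a single enumeration $(x_j)$ of $c_{00}\cap(\Q+i\Q)^{\N_0}$. Linear independence and the inclusion in $X\setminus Y\cup\{0\}$ are then pure support-chasing, using the almost disjointness of the $A^k$ in place of your Vandermonde step. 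Notably, the paper never forms an infinite series, so it is indifferent to whether $X$ is complete; your definition $x^{(n)}_\alpha=\sum_m\alpha^m v_{n,m}$ does need the limit to land back in $X$, and for $X=A^\infty(\D)$ (considered with the relative topology from $\C^{\N_0}$, in which it is not closed) this is not automatic and would require additional control on the $v_{n,m}$.

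More seriously, your abstract lemma has an unsatisfiable hypothesis: you ask that the supports of the $v_{n,m}$ be disjoint from $\bigcup_n\supp(q_n)$, but in every space $X$ of the chain the coordinate evaluations $x\mapsto x(k)$ are continuous, so $\{x\in X:x(k)=0\}$ is a closed proper hyperplane. If some $k$ were absent from $\bigcup_n\supp(q_n)$, the span of the $q_n$ would lie in that hyperplane and could not be dense. Thus $\bigcup_n\supp(q_n)=\N_0$ necessarily, forcing every $v_{n,m}$ to be zero, contradicting $v_{n,m}\in X\setminus Y$. The repair is precisely what the paper uses: drop the global disjointness and exploit instead that any \emph{finite} collection of the $q_n$ has finite union of supports. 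Your decomposition of $w$ into a $q$-part and a $v$-part then holds only on a cofinite set of coordinates, but together with $c_{00}\subseteq Y$ and the solidity condition \textit{(iv)} on $Y$ this suffices to isolate a single term $\beta_{n_0,m_0}v_{n_0,m_0}$ and run your Vandermonde argument. With that correction (and attention to the completeness issue above) your scheme can be made to work, though the paper's approach reaches the conclusion with less analytic overhead.
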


\begin{theorem}
Let $Y,X$ be spaces of the chain \cref{chain} such that $Y \subsetneq X$.
Then there exist a set $I$ with $\card(I) = \mathfrak{c}$ and a linearly independent family $(F^k)_{k \in I}$ of closed infinite-dimensional vector subspaces of $X$ contained in $X \setminus Y \cup \{0\}$.
\end{theorem}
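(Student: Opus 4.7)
The plan is to reduce the problem to finding $\mathfrak{c}$ linearly independent closed infinite-dimensional subspaces inside a single witness, then construct them by a Vandermonde-style recipe on a block basic sequence. Invoking the second theorem above (from \cite{BIG}) yields a closed infinite-dimensional vector subspace $F\subseteq X\setminus Y\cup\{0\}$. Since every vector subspace of $F$ is automatically contained in $X\setminus Y\cup\{0\}$, the original statement will follow once I produce $\mathfrak{c}$ linearly independent closed infinite-dimensional subspaces of $F$.

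For the construction I would select a basic sequence $(e_n)_{n\in\N}$ in $F$ (available via the standard coordinate vectors in each space of the chain \cref{chain}), partition $\N=\bigsqcup_{n\in\N}B_n$ into countably many pairwise disjoint infinite blocks, and enumerate $B_n=\{m(n,0),m(n,1),\ldots\}$. Choosing positive scalars $c^{(n)}_j$ decaying fast enough with respect to an exhausting sequence of seminorms defining the topology of $X$, define, for each $r$ in a fixed symmetric neighborhood $U\subseteq\R$ of $0$,
\[
v^r_n\;:=\;\sum_{j=0}^\infty r^j\,c^{(n)}_j\,e_{m(n,j)}\qquad(n\in\N),
\]
where the series converges in $F$ for every $r\in U$. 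Put $F^r:=\overline{\Span}\{v^r_n:n\in\N\}\subseteq F$; since the $v^r_n$ have pairwise disjoint $(e_n)$-supports, $F^r$ is closed and infinite-dimensional, hence contained in $X\setminus Y\cup\{0\}$. As $\card(U)=\mathfrak{c}$, the family $(F^r)_{r\in U}$ has the desired size.

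Linear independence is verified via a Vandermonde argument. Given distinct $r_1,\ldots,r_k\in U$ and $w_i\in F^{r_i}$ with $\sum_{i=1}^k w_i=0$, expand $w_i=\sum_n a^{(i)}_n v^{r_i}_n$, interchange summations, and use that $(e_n)$ is basic to obtain
\[
\sum_{i=1}^k a^{(i)}_n\, r_i^{\,j}\;=\;0 \qquad \text{for every } n\in\N \text{ and every } j\geq 0.
\]
For each fixed $n$, the first $k$ equations (in $j=0,\ldots,k-1$) form a Vandermonde system in the unknowns $a^{(1)}_n,\ldots,a^{(k)}_n$ with distinct nodes, hence all $a^{(i)}_n=0$ and $w_i=0$.

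The step I expect to be the main obstacle is the convergence and manipulation of the series defining $v^r_n$ in the non-normable Fréchet spaces of the chain (in particular $H(\D)$, $\bigcap_{p>0}\ell^p$, and $\C^{\N_0}$): the scalars $c^{(n)}_j$ must be chosen simultaneously small enough to ensure absolute convergence of the defining series, uniformly for $r\in U$, in every seminorm of $X$, yet remain nonzero so that the Vandermonde step is legitimate. This is handled by a standard diagonalization against the countable family of seminorms and the basic sequence $(e_n)$. A secondary technical point is the existence of a basic sequence in $F$, which is routine for the separable spaces of the chain and only slightly delicate for $\ell^\infty$.
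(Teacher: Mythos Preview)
Your reduction---find $\mathfrak{c}$ linearly independent closed infinite-dimensional subspaces inside a single witness $F$---is sound in principle, and the Vandermonde recipe is a pleasant way to manufacture such a family once a basic sequence is in hand. The gap is exactly at the point where you produce the basic sequence. You say it is ``available via the standard coordinate vectors in each space of the chain'', but the unit vectors lie in $c_{00}\subseteq Y$, whereas $F\subseteq (X\setminus Y)\cup\{0\}$; hence none of them belong to $F$. What you actually need is a basic sequence in an \emph{arbitrary} closed infinite-dimensional subspace $F$ of $X$. For the Banach members of the chain this is Mazur's theorem, but for $\ell^p$ with $0<p<1$, the Fr\'echet spaces $\bigcap_{q>p}\ell^q$, $H(\D)$, $\C^{\N_0}$, and above all $A^{\infty}(\D)$ (which, with the subspace topology from $\C^{\N_0}$ declared in the paper, is not even complete), this is not the routine step you indicate. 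The same incompleteness bites when you need the block series $\sum_j r^j c^{(n)}_j e_{m(n,j)}$ to converge in $F$: Cauchy with respect to every seminorm does not by itself deliver a limit in $F$. Your Vandermonde step also tacitly uses continuous coordinate functionals for $(e_n)$ to read off the equations $\sum_i a^{(i)}_n r_i^{\,j}=0$; that is precisely the basic-sequence property you have not secured.

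The paper sidesteps all of this by never passing to an abstract $F$. It works directly with the coordinate structure of $\C^{\N_0}$: choose an almost disjoint family $(A^k)_{k\in I}$ of infinite subsets of $\N_0$ with $\card(I)=\mathfrak{c}$, split each $A^k$ into infinitely many infinite disjoint pieces $A^k_j$, use assumption~\textit{(ii)} to pick $y^k_j\in X\setminus Y$ with $\supp(y^k_j)\subseteq A^k_j$, and set $F^k=\cl_X\Span\{y^k_j:j\in\N\}$. That $F^k\subseteq(X\setminus Y)\cup\{0\}$ is a support argument using only that convergence in $X$ implies pointwise convergence (assumption~\textit{(iii)}) together with assumption~\textit{(iv)}, and linear independence of the family follows from the pairwise finite intersections of the $A^k$. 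No basic sequences, no series manipulations, and no completeness hypotheses are needed.
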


Let $X$ be a topological vector space of complex-valued sequences and $Y$ be a subset of $X$.
Sufficient conditions for $X \setminus Y$ to be algebraically generic in $Y$ were introduced in \cite{BIG}.
Sufficient conditions for $X \setminus Y$ to be spaceable in $Y$ were also introduced in \cite{BIG}.
We show that in both cases of algebraic genericity and spaceability these conditions are sufficient in order to find uncountably infinitely many linearly independent vector subspaces of $X$ contained in $X \setminus Y \cup \{0\}$ that satisfy the respective property in each case.

Especially in the case of algebraic genericity, we show that these conditions also imply that $X \setminus Y$ is maximal algebraically generic in $X$.
More specifically we show that in our construction of the infinitely many linearly independent dense vector subspaces of $X$ contained in $X \setminus Y \cup \{0\}$ it holds that the subset generated by their union is also contained in $X \setminus Y \cup \{0\}$.
Thus we are also able to prove the following theorem.

\begin{theorem}
\label{thm:intro}
Let $Y,X$ be spaces of the chain \cref{chain} such that $Y \subsetneq X$ and $X \neq \ell^{\infty}$.
Then $X \setminus Y$ is maximal algebraically generic in $X$.
\end{theorem}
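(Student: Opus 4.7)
The plan is to derive \cref{thm:intro} as an immediate corollary of the uncountably infinite algebraic genericity theorem (the third displayed theorem in the introduction), together with an elementary cardinality count. Applying that theorem, I obtain a set $I$ with $\card(I) = \mathfrak{c}$ and a linearly independent family $(F^k)_{k \in I}$ of dense vector subspaces of $X$ such that $F := \Span\bigl(\bigcup_{k \in I} F^k\bigr)$ is contained in $X \setminus Y \cup \{0\}$. My candidate for witnessing maximal algebraic genericity is precisely this subspace $F$, and all that remains is to verify density, a lower bound $\dim F \geq \mathfrak{c}$, and the matching upper bound $\dim X \leq \mathfrak{c}$.

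For density, $F \supseteq F^k$ for any fixed $k \in I$ and each $F^k$ is dense in $X$, so $F$ is dense. For the lower bound on $\dim F$, note that $X$ is infinite-dimensional, so each dense subspace $F^k$ is nonzero; choose $v^k \in F^k \setminus \{0\}$ for every $k \in I$. The definition of linear independence of the family $(F^k)_{k \in I}$ then forces $\{v^k : k \in I\}$ to be a linearly independent subset of $F$ of cardinality $\mathfrak{c}$, so $\dim F \geq \mathfrak{c}$. For the upper bound on $\dim X$, every space $X$ in the chain \cref{chain} is a subset of $\C^{\N_0}$, so $\card(X) \leq \mathfrak{c}^{\aleph_0} = \mathfrak{c}$, and the algebraic dimension of any vector space is bounded by its cardinality; hence $\dim X \leq \mathfrak{c}$.

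Combining these facts, $\mathfrak{c} \leq \dim F \leq \dim X \leq \mathfrak{c}$, so $\dim F = \dim X$, and $F$ is a dense vector subspace of $X$ with $F \subseteq X \setminus Y \cup \{0\}$ of dimension equal to $\dim X$. This is exactly the assertion that $X \setminus Y$ is maximal algebraically generic in $X$. There is no substantive obstacle in this argument: the heavy lifting -- producing $\mathfrak{c}$ many linearly independent dense subspaces whose \emph{joint} span, and not merely each subspace individually, avoids $Y$ -- is already accomplished by the preceding theorem, and the content of \cref{thm:intro} over that theorem is only the observation that $\mathfrak{c}$ saturates the possible dimension in a subspace of $\C^{\N_0}$.
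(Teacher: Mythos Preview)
Your proposal is correct and mirrors the paper's own argument: the paper likewise obtains $F$ from the uncountably infinite algebraic genericity construction (packaged in \cref{lem:alggen} together with \cref{prop:agassumptions}) and then runs the identical cardinality sandwich $\mathfrak{c} = \card(I) \leq \dim F \leq \dim X \leq \card(\C^{\N_0}) = \mathfrak{c}$ to conclude $\dim F = \dim X$. The density argument and the selection of one nonzero vector per $F^k$ are exactly as in the paper.
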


We note that Papathanasiou in \cite{PAP} proved that $\ell^{\infty} \setminus c_{0}$ is maximal algebraically generic in $\ell^{\infty}$, thus answering the question of whether \cref{thm:intro} holds in the case $X = \ell^{\infty}$.
It is worth mentioning that algebraic genericity is often referred to as dense lineability in the literature.
More information on the notions of algebraic genericity and spaceability can be found in the book \cite{LINEARITY} and in the expository paper \cite{EXP}.

\section{Preliminaries}

Let $\N = \{1,2,\dots \}$ denote the set of natural numbers and $\N_0 = \N \cup \{0\}$.
If $x : \N_0 \to \C$ is a complex-valued sequence and $n \in \N_0$ we denote by $x(n)$ the value of the sequence $x$ at the number $n$.
We denote by $\C^{\N_0}$ the set of all such complex-valued sequences.
We equip $\C^{\N_0}$ with the product metric obtained by considering each copy of $\C$ with its standard metric.

For every $p \in (0,\infty)$ let
$$\ell^p = \left\{x \in \C^{\N_0} : \sum_{n=0}^{\infty}{\abs{x(n)}^p} < \infty \right\}.$$
We equip $\ell^{p}$ with the metric $d^p$ defined as
$$d^p(x,y) = \left(\sum_{n = 0}^{\infty}{\abs{x(n) - y(n)}^p} \right)^{\frac{1}{p}}$$
for $p \in [1,\infty), x , y \in \ell^p$ and as
$$d^p(x,y) = \sum_{n = 0}^{\infty}{\abs{x(n) - y(n)}^p}$$
for $p \in (0,1), x,y \in \ell^p$.
We also let
$$\ell^{\infty} = \left\{x \in \C^{\N_0} : \sup_{n \in \N_0}{\abs{x(n)}} < \infty \right\}.$$
We equip $\ell^{\infty}$ with the metric $d^{\infty}$ defined as
$$d^{\infty}(x,y) = \sup_{n \in \N_0}{\abs{x(n) - y(n)}}$$
for $x,y \in \ell^{\infty}$.
For every $p \in [0,\infty)$ we equip the space $\bigcap_{q > p}{\ell^q}$ with the metric $\delta^p$ defined as
$$\delta^p(x,y) = \sum_{k = 1}^{\infty}{\frac{1}{2^k}\frac{d^{p_k}(x,y)}{1 + d^{p_k}(x,y)}}$$
where $x,y \in \bigcap_{q > p}{\ell^q}$ and $(p_k)_{k=1}^{\infty}$ is a decreasing sequence in $(0,\infty)$ that converges to $p$.
For the metric $\delta^p$ we make the following remark.
\begin{remark}
Let $(x_m)_{m=1}^{\infty}$ be a sequence in $\bigcap_{q > p}{\ell^q}$ and $x \in \bigcap_{q > p}{\ell^q}$ for some $p \in [0,\infty)$.
We have $x_m \to x$ as $m \to \infty$ with respect to $\delta^p$ if and only if $x_m \to x$ as $m \to \infty$ with respect to $d^q$ for every $q > p$.
\end{remark}
Let
$$c_{0} = \left\{x \in \C^{\N_0} : \lim_{n \to \infty}{x(n)} = 0 \right\}$$
and
$$c_{00} = \left\{x \in \C^{\N_0} : \text{there exists } N \in \N \text{ such that } x(n) = 0 \text{ for all } n \geq N \right\}.$$
We consider both $c_{0}$ and $c_{00}$ as metric subspaces of $\ell^{\infty}$.

Let $A^{\infty}(\D)$ be the set of all holomorphic functions on the open unit disk such that the function and all of its derivatives can be continuously extended on the closed unit disk.
We view $A^{\infty}(\D)$ as a sequence space by identifying every function with the sequence of its Taylor coefficients.
This way we have
$$A^{\infty}(\D) = \left\{x \in \C^{\N_0}: \lim_{n\to \infty}{n^k x(n)} = 0 \text{ for every } k \in \N_0 \right\}.$$
We consider $A^{\infty}(\D)$ as a metric subspace of $\C^{\N_0}$.

Let $H(\D)$ be the set of all holomorphic functions on the open unit disk endowed with the topology of uniform convergence in the compact subsets of the unit disk.
We view $H(\D)$ as a sequence space by identifying every function with the sequence of its Taylor coefficients.
This way we have
$$H(\D) = \left\{x \in \C^{\N_0} : \limsup_{n\to \infty}{\sqrt[n]{\abs{x(n)}}} \leq 1 \right\}.$$

Considered with their respective topologies and the usual addition and scalar multiplication of sequences, each space of the chain \cref{chain} is a metrizable topological vector space.
All the inclusions for the chain \cref{chain} and their strictness have been proven in \cite{BIG,NEST,BERNAL}.

Let $X$ be a space of the chain \cref{chain}, and $(F^k)_{k \in I}$ be a linearly independent family of vector subspaces of $X$.
As $X \subseteq \C^{\N_0}$ and the cardinality of the latter is $\mathfrak{c}$ we have that $\dim(X) \leq \mathfrak{c}$.
Since the family $(F^k)_{k \in I}$ of vector subspaces is linearly independent by selecting a non-zero vector in each subspace $F^k$ for every $k \in I$ we obtain a linearly independent subset of $X$ with cardinality that of the set $I$.
Hence $\card(I) \leq \dim(X)$ and so $\card(I) \leq \mathfrak{c}$.
Therefore we can find at most $\mathfrak{c}$ linearly independent subspaces of $X$.
This motivates the next two definitions.

\begin{definition}
Let $E$ be a topological vector space and $M$ be a subset of $E$.
We say that we have uncountably infinite algebraic genericity for $M$ if and only if there exist a set $I$ with $\card(I) = \mathfrak{c}$ and a linearly independent family $(F^k)_{k \in I}$ of dense vector subspaces of $E$ such that $F^k \subseteq M \cup \{0\}$ for every $k \in I$.
\end{definition}

\begin{definition}
Let $E$ be a topological vector space and $M$ be a subset of $E$.
We say that we have uncountably infinite spaceability for $M$ if and only if there exist a set $I$ with $\card(I) = \mathfrak{c}$ and a linearly independent family $(F^k)_{k \in I}$ of closed infinite-dimensional vector subspaces of $E$ such that $F^k \subseteq M \cup \{0\}$ for every $k \in I$.
\end{definition}

We continue with two well-known lemmata for which we provide sketches of possible proofs.

\begin{lemma}
\label{lem:basic1}
Let $A$ be a countably infinite set.
There exists a family $(A_j)_{j=1}^{\infty}$ of countably infinite pairwise disjoint subsets of $A$, such that $A = \bigcup_{j=1}^{\infty}{A_j}$.
\end{lemma}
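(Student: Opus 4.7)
The plan is to reduce the problem to the case $A = \N$ and then exhibit an explicit partition. Since $A$ is countably infinite, there is a bijection $\varphi : \N \to A$, so if $(B_j)_{j=1}^{\infty}$ is a family of countably infinite pairwise disjoint subsets of $\N$ whose union is $\N$, then $A_j = \varphi(B_j)$ is such a family for $A$. Hence it suffices to partition $\N$ into countably many countably infinite pieces.

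For the partition of $\N$, I would use the 2-adic valuation: every $n \in \N$ can be written uniquely as $n = 2^{j-1}(2k-1)$ with $j, k \in \N$. Setting
$$B_j = \{2^{j-1}(2k-1) : k \in \N\}$$
for each $j \in \N$ gives the desired decomposition. Uniqueness of the factorization forces the sets $B_j$ to be pairwise disjoint, each $B_j$ is clearly countably infinite (being parametrized by $k \in \N$), and the existence of such a factorization for every $n \in \N$ shows that $\N = \bigcup_{j=1}^{\infty} B_j$. An equally valid alternative is to fix any bijection $\psi : \N \to \N \times \N$ and set $B_j = \psi^{-1}(\{j\} \times \N)$, but the 2-adic description is slightly more concrete.

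There is no real obstacle here; the only small point to verify is that the decomposition $n = 2^{j-1}(2k-1)$ exists and is unique, which is immediate from writing $n$ as an even part times an odd part. Transporting the partition back to $A$ via $\varphi$ then completes the argument.
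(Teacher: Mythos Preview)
Your proof is correct. The paper's argument is precisely the alternative you mention at the end: it fixes a bijection $f:\N\times\N\to A$ and sets $A_j=\{f(i,j):i\in\N\}$, which is your $\varphi\circ\psi^{-1}$ applied to the columns $\{j\}\times\N$. Your primary construction via the $2$-adic factorization $n=2^{j-1}(2k-1)$ is just an explicit choice of such a bijection $\N\to\N\times\N$, so the two approaches are really the same idea, with yours being slightly more concrete and the paper's slightly more abstract.
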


\begin{proof}
Since the set $\N \times \N$ is countably infinite there exists a bijection $f: \N \times \N \to A$.
For each $j \in \N$ we define $A_j = \{ f(i,j)  : i \in \N \}$.
The claims follow from the fact that $f$ is a bijection.
\end{proof}

\begin{lemma}
\label{lem:basic2}
Let $A$ be a countably infinite set.
There exists some set $I$ with $\card(I) = \mathfrak{c}$ and a family $(A^k)_{k\in I}$ of countably infinite subsets of $A$ with pairwise finite intersections, such that $A = \bigcup_{k \in I}{A^k}$.
\end{lemma}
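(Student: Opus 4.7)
The plan is to construct the family abstractly on a convenient countable set $T$ and then transport it to $A$ via a bijection $\phi : T \to A$ (which exists since $T$ is countably infinite). For $T$ I would take the set of nonempty finite binary sequences, and for the index set I would take $I = \{0,1\}^{\N}$, the set of infinite binary sequences, which has cardinality $\mathfrak{c}$. For each $\sigma \in I$ I set
$$T^{\sigma} = \{ (\sigma(1), \ldots, \sigma(n)) : n \in \N \},$$
the collection of all nonempty initial segments of $\sigma$; each $T^{\sigma}$ is clearly countably infinite.

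Next I verify the two structural properties on $T$. For pairwise finite intersections: given distinct $\sigma, \tau \in I$, let $N$ be the smallest index at which they differ; any string lying in both $T^{\sigma}$ and $T^{\tau}$ must have length strictly less than $N$, so $T^{\sigma} \cap T^{\tau}$ has at most $N-1$ elements. For coverage: any nonempty finite binary sequence $u = (u_1, \ldots, u_n)$ is the length-$n$ initial segment of the infinite sequence $(u_1, \ldots, u_n, 0, 0, \ldots)$, so $u$ lies in the $T^{\sigma}$ associated to that sequence. Setting $A^k = \phi(T^k)$ for $k \in I$ then transfers countable infiniteness, pairwise finite intersection, and coverage of $A$ through the bijection $\phi$.

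The only mildly subtle point is coverage. The perhaps more natural approach of identifying $A$ with $\Q$ and associating to each real $r$ a sequence of distinct rationals converging to $r$ does yield pairwise finite intersections — two such sequences, if they shared infinitely many terms, would produce a subsequence simultaneously converging to two distinct limits — but it does not automatically cover every rational and would require additional manual patching. The binary tree construction above sidesteps this difficulty entirely, which is why I would favour it.
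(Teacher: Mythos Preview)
Your proof is correct and is essentially the same as the paper's: the paper's sketch bijects $A$ with the vertices of a complete binary tree and takes $I$ to be the set of infinite branches, with $A^k$ the set of vertices along branch $k$ --- exactly your construction with $T$ the nonempty finite binary strings and $T^{\sigma}$ the initial segments of $\sigma$. Your added remark comparing this with the rationals-converging-to-reals approach is a nice piece of context but not needed for the argument.
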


\begin{proof}
Consider a bijection of the set $A$ to the set of vertices of a complete binary tree.
Then the set $I$ is the set of infinite branches of the tree and the set $A^k$ is the set of vertices of the branch $k \in I$.
\end{proof}

Finally, concerning notation, if $x \in \C^{\N_0}$ we write $\supp(x) = \{n \in \N_0 : x(n) \neq 0 \}$ for its support and we denote by $\mathds{1}_A$ the characteristic function of the set $A$.

\section{Uncountably infinite algebraic genericity}

We begin by stating and proving the key lemma of this section.

\begin{lemma}
\label{lem:alggen}
Let $X$ be a metrizable topological vector space and $Y$ be a vector subspace of $X$.
We assume the following.
\begin{enumerate}[(i)]
\item It is $c_{00} \subseteq Y \subseteq X \subseteq \C^{\N_0}$.
\item If $A \subseteq \N_0$ is infinite, then there exists $y \in X \setminus Y$ supported in $A$.
\item The space $c_{00}$ is dense in $X$.
\item For every $x \in Y$ and $A \subseteq \N_0$, the sequence $x\mathds{1}_A$ belongs to $Y$.
\end{enumerate}
Then there exists a set $I$ with $\card(I) = \mathfrak{c}$ and a linearly independent family $(F^k)_{k \in I}$ of dense vector subspaces of $X$ such that the vector subspace generated by the set $\bigcup_{k \in I}{F^k}$ is contained in $X \setminus Y \cup \{0\}$.
Furthermore, $X \setminus Y$ is maximal algebraically generic in $X$.
\end{lemma}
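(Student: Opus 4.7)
The plan is to use \cref{lem:basic2} and \cref{lem:basic1} to set up a two-scale decomposition of $\N_0$, then exploit (ii)--(iv) to build the family $(F^k)_{k \in I}$. First I apply \cref{lem:basic2} to $\N_0$ to obtain a set $I$ with $\card(I) = \mathfrak{c}$ and a family $(A^k)_{k \in I}$ of countably infinite subsets of $\N_0$ with pairwise finite intersections. For each $k \in I$, I apply \cref{lem:basic1} to split $A^k = \bigcup_{j=1}^\infty A^k_j$ into pairwise disjoint countably infinite pieces. Condition (ii) supplies, for each pair $(k,j)$, some $y^k_j \in X \setminus Y$ supported in $A^k_j$; since $Y$ is a vector subspace, every non-zero scalar multiple of $y^k_j$ remains outside $Y$, and by continuity of scalar multiplication I may rescale to $z^k_j$ with $z^k_j \to 0$ in $X$ as $j \to \infty$. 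Using (iii), the space $c_{00} = \bigcup_{N \in \N} c_{00}^N$ is separable (as a countable union of finite-dimensional subspaces), so I fix an enumeration $(d_j)_{j=1}^\infty$ of a countable dense subset of $c_{00}$ in which each entry is repeated infinitely often, and define
\[
F^k := \Span\{d_j + z^k_j : j \in \N\}, \qquad k \in I.
\]

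Density of each $F^k$ is then immediate: given $x \in X$ and a neighborhood $U$ of $x$, continuity of addition produces a neighborhood $W$ of $0$ and a point $d_m$ close to $x$ with $d_m + W \subseteq U$; since infinitely many $j$ satisfy $d_j = d_m$ and $z^k_j \to 0$, at least one such $j$ gives $z^k_j \in W$, hence $d_j + z^k_j \in U \cap F^k$.

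The decisive step is to prove simultaneously that the family $(F^k)_{k \in I}$ is linearly independent and that $V := \sum_{k \in I} F^k$ meets $Y$ only at $0$; both follow if I can show that any $v \in V \cap Y$ must have all its defining coefficients equal to $0$. Write $v = \sum_{i=1}^n \sum_{j \in S_i} c^{k_i}_j (d_j + z^{k_i}_j)$ for distinct $k_1,\dots,k_n \in I$ and finite $S_i \subset \N$, and fix any pair $(i_0, j_0)$ with $j_0 \in S_{i_0}$. Setting $B := A^{k_{i_0}}_{j_0}$ and applying (iv), I have $v\mathds{1}_B \in Y$. Each $c^{k_i}_j d_j \mathds{1}_B$ lies in $c_{00} \subseteq Y$; each $z^{k_{i_0}}_j \mathds{1}_B$ with $j \neq j_0$ vanishes by disjointness of the $A^{k_{i_0}}_j$; and each $z^{k_i}_j \mathds{1}_B$ with $i \neq i_0$ is supported in the finite set $A^{k_i} \cap A^{k_{i_0}}$ (by \cref{lem:basic2}), so lies in $c_{00} \subseteq Y$. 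Subtracting these $Y$-contributions from $v\mathds{1}_B$ forces $c^{k_{i_0}}_{j_0} z^{k_{i_0}}_{j_0} \in Y$, which together with $z^{k_{i_0}}_{j_0} \notin Y$ yields $c^{k_{i_0}}_{j_0} = 0$. Since $(i_0,j_0)$ was arbitrary, every coefficient vanishes, so $v = 0$ and each $v^{k_i} = 0$.

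The maximal algebraic genericity of $X \setminus Y$ falls out at once: $V$ is a dense vector subspace of $X$ contained in $X \setminus Y \cup \{0\}$, and picking a non-zero vector from each $F^k$ (which exists because $d_j + z^k_j = 0$ would give $z^k_j = -d_j \in c_{00} \subseteq Y$) yields $\mathfrak{c}$ linearly independent vectors in $V$, so $\dim V \geq \mathfrak{c}$; the bound $\dim X \leq \mathfrak{c}$ coming from $X \subseteq \C^{\N_0}$ then gives $\dim V = \dim X$. The main obstacle is arranging the two decomposition scales so that the restriction trick isolates exactly one non-trivial $z$-term: this is precisely the interplay between condition (iv) (allowing coordinate restriction inside $Y$), the disjointness of the $A^{k}_j$ within each $A^k$ (from \cref{lem:basic1}), and the finite-intersection property between the $A^k$'s (from \cref{lem:basic2}).
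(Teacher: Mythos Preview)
Your proof is correct and follows the same strategy as the paper: the two-scale decomposition of $\N_0$ via \cref{lem:basic2} and \cref{lem:basic1}, the perturbation of a countable dense subset of $c_{00}$ by rescaled elements $z^k_j \in X \setminus Y$ with supports in $A^k_j$, and the coordinate-restriction trick using (iv) to isolate a single $z$-term. Your execution is in fact a little cleaner than the paper's in two places: you merge the linear-independence and $V \cap Y = \{0\}$ arguments into a single coefficient-killing step, and in that step you note that the cross-terms $z^{k_i}_j \mathds{1}_B$ with $i \neq i_0$ lie in $c_{00} \subseteq Y$ (finite support from the pairwise-finite-intersection property) rather than first truncating to $[N,\infty)$ to force them to vanish, as the paper does.
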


\begin{proof}
By assumption \textit{(iii)}, the space $c_{00}$ is dense in $X$ and thus the space $c_{00}\cap (\Q + i\Q)^{\N_0}$ is also dense in $X$.
Let $\{x_j : j \in \N\}$ be an enumeration of $c_{00}\cap (\Q + i\Q)^{\N_0}$.
By \cref{lem:basic2} there exists a set $I$ with $\card(I) = \mathfrak{c}$ and a family  $(A^k)_{k\in I}$ of countably infinite subsets of $\N_0$ with pairwise finite intersections, such that $\N_0 = \bigcup_{k\in I}{A^k}$.
For every $k \in I$, by \cref{lem:basic1}, there exists a family $(A_j^k)_{j=1}^{\infty}$ of countably infinite pairwise disjoint subsets of $A^k$, such that $A^k = \bigcup_{j=1}^{\infty}{A_j^k}$.
We now fix $k \in I$.
For every $j \in \N$, by assumption \textit{(ii)}, there exists $y_j^k \in X \setminus Y$ supported in $A_j^k$.
Since $X$ is a topological vector space, by Birkhoff-Kakutani theorem there exists a metric $d_X$ on $X$ that induces its topology and is translation invariant.
Also, for every $j \in \N$ there exists $c_j^k \in \C \setminus \{0\}$ such that $d_X(c_j^ky_j^k,0)< 1/j.$
For every $j \in \N$ let $f_j^k = x_j + c_j^ky_j^k$.
We define $F^k = \Span\{f_j^k: j \in \N \}$.
As $X$ is a vector space and $f_j^k \in X$ for all $j \in \N$, it follows that $F^k \subseteq X$.
The metric $d_X$ is translation invariant and so $d_X(f_j^k, x_j) < 1/j$ for all $j \in \N$.
Moreover $X$ does not contain any isolated points as a topological vector space and the set $\{x_j : j \in \N\} $ is dense in $X$.
Therefore the set $\{f_j^k: j \in \N\}$ is dense in $X$, which in turn implies that $F^k$ is dense in $X$.

The family $(F^k)_{k \in I}$ of dense vector subspaces of $X$ is linearly independent.
Indeed, let $J$ be a finite subset of $I$ and $v_k \in F^{k}$ for every $k \in J$ be such that $\sum_{k \in J}{v_k} = 0$.
Let us assume by contradiction that there exists some $k_0 \in J$ such that $v_{k_0} \neq 0$.
By definition of $F^{k}$ for every $k \in J$ there exist a natural number $M(k)$ and complex numbers $t_1^{k},\dots, t_{M(k)}^{k}$ such that
$$v_k = \sum_{j = 1}^{M(k)}{t_j^{k}f_j^{k}}.$$
In particular, since $v_{k_0} \neq 0$ we may assume that $t^{k_0}_{M(k_0)} \neq 0$.
Thus
$$0 = \sum_{k \in J}{v_k} = \sum_{k \in J}{\sum_{j = 1}^{M(k)}{t_j^{k}f_j^{k}}} = \sum_{k \in J}{\sum_{j = 1}^{M(k)}{t_j^{k}(c_j^{k}y_j^{k} + x_j)}}.$$
We set $M = \max \{M(k) : k \in J\}$, which exists as the set $J$ is finite.
Since $x_1,\dots,x_M$ are all elements of $c_{00}$ there exists a natural number $N_1$ such that for every $n \geq N_1$ we have $x_j(n) = 0$ for all $j \in \{1,\dots,M\}$.
The set $A^{k_0} \cap A^{k}$ is finite for every $k \in J \setminus \{k_0\}$ and so, as the set $J$ is finite, there exists a natural number $N_2$ such that for all $n \geq N_2$ with $n \in A^{k_0}$ we have $n \not\in A^{k}$ for every $k \in J \setminus \{k_0\}$.
By selection $y^{k_0}_{M(k_0)} \not\in Y$ and by assumption \textit{(i)} it is $c_{00} \subseteq Y$ hence $ y^{k_0}_{M(k_0)} \not\in c_{00}$.
Therefore, there exists a natural number $n_0 \geq \max \{N_1,N_2\}$ such that $y^{k_0}_{M(k_0)}(n_0) \neq 0$.
The sequence $y^{k_0}_{M(k_0)}$ is supported in the set $A^{k_0}_{M(k_0)}$ and so $n_0 \in A^{k_0}_{M(k_0)} \subseteq A^{k_0}$.
By selection of $N_1$ and as $n_0 \geq N_1$ it follows that
$$\left(\sum_{k \in K}{\sum_{j = 1}^{M(k)}{t_j^{k}(c_j^{k}y_j^{k} + x_j)}}\right)(n_0) = \sum_{k \in J}{\sum_{j = 1}^{M(k)}{t_j^{k}c_j^{k}y_j^{k}(n_0)}}.$$
By selection of $n_0,N_2$, the fact that $\supp(y_j^k) \subseteq A^k$ for every $k \in I$ and $j \in \N$ and as $n_0 \geq N_2$, it follows that $n_0 \in \supp\left(y^{k_0}_{M(k_0)}\right)$ and $n_0 \not\in \supp(y^k_j)$ for every $k \in J\setminus \{k_0\}$ and $j \in \N$.
Hence
$$\sum_{k \in J}{\sum_{j = 1}^{M(k)}{t_j^{k}c_j^{k}y_j^{k}(n_0)}} = \sum_{j = 1}^{M(k_0)}{t_j^{k_0}c_j^{k_0}y_j^{k_0}(n_0)}.$$
Since $\supp(y_j^{k_0}) \subseteq A^{k_0}_j$ for every $j \in \N$ with the sets $(A^{k_0}_j)_{j = 1}^{\infty}$ being pairwise disjoint and $n_0 \in A_{M(k_0)}^{k_0}$ we obtain that
$$\sum_{j = 1}^{M(k_0)}{t_j^{k_0}c_j^{k_0}y_j^{k_0}(n_0)} = t_{M(k_0)}^{k_0}c_{M(k_0)}^{k_0}y_{M(k_0)}^{k_0}(n_0).$$
Combining the last three equalities with our assumption that
$$\sum_{k \in J}{\sum_{j = 1}^{M(k)}{t_j^{k}(c_j^{k}y_j^{k} + x_j)}} = 0$$
we deduce that
$$t_{M(k_0)}^{k_0}c_{M(k_0)}^{k_0}y_{M(k_0)}^{k_0}(n_0) = 0.$$
But, $t_{M(k_0)}^{k_0} \neq 0$ by our assumption and $c_{M(k_0)}^{k_0} \neq 0$ by its selection.
Thus $y_{M(k_0)}^{k_0}(n_0) = 0$.
This is a contradiction as the natural number $n_0$ was chosen so that  $y_{M(k_0)}^{k_0}(n_0) \neq 0$.

Let $F$ be the vector subspace of $X$ generated by the set $\bigcup_{k \in I}{F^k}$, that is,
$$F = \Span\left( \bigcup_{k \in I}{F^k} \right).$$
We show that $F \subseteq X \setminus Y \cup \{0\}$.
Let $v \in F \setminus \{0\}$ be arbitrary.
Then there exists a finite subset $J$ of $I$ and $v_k \in F^{k}$ for every $k \in J$ such that $v = \sum_{k\in J}{v_k}$, because every $F^k$ for $k \in I$ is a vector space.
By definition of $F^{k}$ for every $k \in J$ there exist a natural number $M(k)$ and complex numbers $t_1^{k},\dots, t_{M(k)}^{k}$ such that
$$v_k = \sum_{j = 1}^{M(k)}{t_j^{k}f_j^{k}}.$$
Thus
$$v = \sum_{k \in J}{\sum_{j = 1}^{M(k)}{t_j^{k}f_j^{k}}}$$
and so
$$v = \sum_{k \in J}{\sum_{j = 1}^{M(k)}{t_j^{k}(c_j^{k}y_j^{k} + x_j)}}.$$
As $v \neq 0$ there has to be some $k_0 \in J$ such that $v_{k_0} \neq 0$ and so we may assume that $t_{M(k_0)}^{k_0} \neq 0$.
We set $M = \max \{M(k) : k \in J\}$, which exists as the set $J$ is finite.
As $x_1,\dots,x_M$ are elements of $c_{00}$ there exists a natural number $N_1$ such that for all $n \geq N_1$ we have $x_j(n) = 0$ for every $j \in \{1,\dots,M\}$.
The set $A^{k} \cap A^{k'}$ is finite for every $k, k' \in J$ with $k \neq k'$ and so, as the set $J$ is finite, there exists a natural number $N_2$ such that $A^{k} \cap A^{k'} \cap [N,\infty) = \varnothing$ for every $k,k' \in J$ with $k \neq k'$.
We set $N = \max\{N_1, N_2\}$ and
$$B = A^{k_0}_{M(k_0)} \cap [N, \infty).$$
Suppose by contradiction that $v \in Y$.
Then, by assumption \textit{(iv)} we would have $v\mathds{1}_{B} \in Y$.
However, since $B \subseteq [N, \infty)$ and $N \geq N_1$ it follows that
$$v\mathds{1}_{B} = \sum_{k \in J}{\sum_{j = 1}^{M(k)}{t_j^{k}c_j^{k}y_j^{k}\mathds{1}_{B}}}.$$
For every $k \in J$ and every $j \in \{1,\dots,M(k)\}$ it is $\supp(y^{k}_j) \subseteq A^{k}_j \subseteq A^{k}$.
We claim that for every $k,k' \in \{1\dots,m\}$ and all $j \in \{1,\dots,M(k)\}$ and $j' \in \{1,\dots,M(k')\}$ such that $(k,j) \neq (k',j')$ we have that $A^{k}_{j} \cap A^{k'}_{j'} \cap [N,\infty) = \varnothing$.
Indeed, if $k \neq k'$ this follow immediately by the selection of $N_2$ and the inclusion $$A^{k}_{j} \cap A^{k'}_{j'} \cap [N,\infty) \subseteq A^{k} \cap A^{k'} \cap [N_2,\infty)$$
which holds as $N \geq N_2, A^{k}_j \subseteq A^k$ and $A^{k'}_{j'} \subseteq A^{k'}$.
If $k = k'$ and $j \neq j'$ then our claim follows from the fact that the sets $(A^k_{h})_{h=1}^{\infty}$ are pairwise disjoint.
Considering how the set $B$ was defined we deduce that $\supp(y^{k}_j) \cap B = \varnothing$ for every $k \in J$ and all $j \in \{1,\dots,M(k)\}$ such that $(k,j) \neq (k_0, M(k_0))$.
Hence
$$\sum_{k \in J}{\sum_{j = 1}^{M(k)}{t_j^{k}c_j^{k}y_j^{k}\mathds{1}_{B}}} = t_{M(k_0)}^{k_0}c_{M(k_0)}^{k_0}y_{M(k_0)}^{k_0}\mathds{1}_{B}.$$
It is $\supp\left(y_{M(k_0)}^{k_0}\right) \subseteq A_{M(k_0)}^{k_0}$ and so
$$t_{M(k_0)}^{k_0}c_{M(k_0)}^{k_0}y_{M(k_0)}^{k_0}\mathds{1}_{B} = t_{M(k_0)}^{k_0}c_{M(k_0)}^{k_0}y_{M(k_0)}^{k_0}\mathds{1}_{[N,\infty)}.$$
Therefore we have shown that
$$v\mathds{1}_{B} = t_{M(k_0)}^{k_0}c_{M(k_0)}^{k_0}y_{M(k_0)}^{k_0}\mathds{1}_{[N,\infty)} \in Y.$$
We observe that
$$t_{M(k_0)}^{k_0}c_{M(k_0)}^{k_0}y_{M(k_0)}^{k_0} = t_{M(k_0)}^{k_0}c_{M(k_0)}^{k_0}y_{M(k_0)}^{k_0}\mathds{1}_{[0,N)} + t_{M(k_0)}^{k_0}c_{M(k_0)}^{k_0}y_{M(k_0)}^{k_0}\mathds{1}_{[N,\infty)}$$
and the sequence $t_{M(k_0)}^{k_0}c_{M(k_0)}^{k_0}y_{M(k_0)}^{k_0}\mathds{1}_{[0,N)} $ belongs in $Y$ as it belongs to $c_{00}$ and by assumption \textit{(i)} we have $c_{00} \subseteq Y$.
Also the complex numbers $c_{M(k_0)}^{k_0}$ and $t_{M(k_0)}^{k_0}$ are both non-zero.
As $Y$ is a vector subspace of $X$ it follows that $y_{M(k_0)}^{k_0} \in Y$, which contradicts the selection of $y_{M(k_0)}^{k_0}$.

Finally, $F$ is a dense vector subspace of $X$, as it contains the dense subspaces $F^k$ for $k \in I$, that is contained in $X \setminus Y \cup \{0\}$.
Since the family $(F^k)_{k \in I}$ of vector subspaces of $F$ is linearly independent by selecting a non-zero vector from each subspace $F^k$ we obtain a linearly independent subset of $F$ with cardinality that of the set $I$.
Therefore $\dim(F) \geq \card(I)$.
On the other hand, $F$ is a vector subspace of $X$ and so $\dim(F) \leq \dim(X)$.
By assumption \textit{(i)} it is $X \subseteq \C^{\N_0}$ which implies that $\dim(X) \leq \card(\C^{\N_0})$.
We conclude that
$$\mathfrak{c} = \card(I) \leq \dim(F) \leq \dim(X) \leq \card(\C^{\N_0}) = \mathfrak{c}.$$
It follows that $\dim(F) = \dim(X)$.
Hence $X \setminus Y$ is maximal algebraically generic in $X$.
\end{proof}

The proposition that follows ensures that we can apply the previous lemma to the chain in which we are interested and it has already been proven as Proposition 4.4 in \cite{BIG}.

\begin{proposition}
\label{prop:agassumptions}
If $Y,X$ are spaces of the chain \cref{chain} such that $Y \subsetneq X$ and $X \neq \ell^{\infty}$, then the assumptions of \cref{lem:alggen} are satisfied.
\end{proposition}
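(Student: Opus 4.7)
The plan is to verify all four conditions (i)--(iv) of \cref{lem:alggen} for every pair $Y \subsetneq X$ from the chain \cref{chain} with $X \neq \ell^\infty$. Condition (i) is immediate: $c_{00}$ is the minimum element of the chain and every space of the chain is a subset of $\C^{\N_0}$. For (iii), the truncations $x\mathds{1}_{[0,N]} \in c_{00}$ converge to $x$ as $N \to \infty$ in each admissible $X$ by direct computation --- using the tail estimate $\sum_{n \geq N}|x(n)|^p \to 0$ for $\ell^p$ and $\bigcap_{q>p}\ell^q$, the uniform tail estimate for $c_0$, pointwise convergence for $A^\infty(\D)$ and $\C^{\N_0}$, and absolute uniform convergence on compact subsets of $\D$ for $H(\D)$; the case $X = \ell^\infty$ is excluded precisely because truncations of e.g. the constant sequence $1$ fail to converge in sup-norm. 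Condition (iv) is a \emph{solidity} property: each $Y$ of the chain is closed under $x \mapsto x\mathds{1}_A$, which is clear from each defining inequality, since $\sum_n|x(n)\mathds{1}_A(n)|^p \leq \sum_n|x(n)|^p$, $\sup_n|x(n)\mathds{1}_A(n)| \leq \sup_n|x(n)|$, $\limsup_n|x(n)\mathds{1}_A(n)|^{1/n} \leq \limsup_n|x(n)|^{1/n}$, $n^k|x(n)\mathds{1}_A(n)| \leq n^k|x(n)|$, and a finitely supported sequence remains finitely supported after multiplication by $\mathds{1}_A$.

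The essential content lies in condition (ii). I would reduce it to the case $X = Y^+$, the immediate successor of $Y$ in the chain, since any $y \in Y^+ \setminus Y$ supported in $A$ also satisfies (ii) for every $X \supseteq Y^+$. For each consecutive pair one then exhibits an explicit $y$ supported in (a suitable infinite subset of) $A$. Enumerating $A = \{a_1 < a_2 < \cdots\}$, representative choices are: $y(a_l) = 1/a_l!$ for $c_{00} \subsetneq A^\infty(\D)$; a lacunary $y(a_{l_k}) = 1/a_{l_k}$ on a subsequence with $a_{l_k} \geq 2^k$ for $A^\infty(\D) \subsetneq \bigcap_{p>0}\ell^p$ (giving $n\, y(n) \equiv 1$ on an infinite set while $\sum_n|y(n)|^p$ is geometric for each $p > 0$); $y(a_l) = l^{-1/c}(\log(l+2))^{-2/c}$ for the ``jumps into $\ell^c$'' with $c \in \{a,b\}$; $y(a_l) = l^{-1/c}$ for the ``jumps out of $\ell^c$''; $y(a_l) = 1/\log(l+2)$ for $\bigcap_{p>b}\ell^p \subsetneq c_0$; $y = \mathds{1}_A$ when $Y = c_0$ and $X \supsetneq \ell^\infty$; $y(a_l) = l$ for $\ell^\infty \subsetneq H(\D)$; and $y(a_l) = 2^{a_l}$ for $H(\D) \subsetneq \C^{\N_0}$.

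The main obstacle I foresee is ensuring that all these constructions are uniform in $A$, since $A$ can be arbitrarily sparse. The point is to phrase the defining bounds in terms that depend only on the index $l$, together with the trivial estimate $a_l \geq l - 1$, so that membership in the larger space reduces to a standard $p$-series (or logarithmic-factor) calculation, while the failure of membership in the smaller space is witnessed at the indices $a_l$ themselves. Once this observation is made, each of the consecutive-pair verifications above becomes a routine calculation.
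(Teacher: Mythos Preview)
Your proposal is correct. The paper itself does not prove this proposition: it simply records that the result is Proposition~4.4 of \cite{BIG} and moves on. Your sketch supplies the verification that the paper outsources. The reduction of condition~(ii) to consecutive pairs $Y \subsetneq Y^{+}$ and the explicit witnesses you list (factorials, lacunary reciprocals, $l^{-1/c}$ with and without logarithmic damping, $1/\log l$, $\mathds{1}_A$, $l$, $2^{a_l}$) are precisely the kind of case-by-case argument that underlies the cited reference, and each one checks out via the estimate $a_l \geq l-1$ you isolate. Your treatments of density~(iii) and solidity~(iv) are accurate for every space in the chain. The one spot requiring a word of care is $Y = c_0$, whose immediate successor $\ell^{\infty}$ is excluded as $X$; you handle this correctly by observing that $\mathds{1}_A \in \ell^{\infty} \subseteq X$ for the remaining admissible choices $X \in \{H(\D),\, \C^{\N_0}\}$.
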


\cref{prop:agassumptions} and \cref{lem:alggen} lead to the first main result of this section.

\begin{theorem}
Let $Y,X$ be spaces of the chain \cref{chain} such that $Y \subsetneq X$ and $X \neq \ell^{\infty}$.
Then there exists a set $I$ with $\card(I) = \mathfrak{c}$ and a linearly independent family $(F^k)_{k \in I}$ of dense vector subspaces of $X$ such that the vector subspace generated by the set $\bigcup_{k \in I}{F^k}$ is contained in $X \setminus Y \cup \{0\}$.
In particular, we have uncountably infinite algebraic genericity for $X\setminus Y$ in $X$.
\end{theorem}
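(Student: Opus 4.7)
The plan is to observe that this theorem is essentially immediate from the two preceding results, so the whole proof reduces to verifying the hypotheses and chaining them together. First I would note that the assumption $Y \subsetneq X$ with $X \neq \ell^{\infty}$ matches exactly the hypothesis of \cref{prop:agassumptions}, so I can invoke that proposition to conclude that conditions \textit{(i)}--\textit{(iv)} of \cref{lem:alggen} hold for this particular pair $Y \subseteq X$. In particular, $c_{00} \subseteq Y$, $c_{00}$ is dense in $X$, every infinite subset of $\N_0$ supports some element of $X \setminus Y$, and $Y$ is closed under multiplication by characteristic functions.

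Then I would simply apply \cref{lem:alggen} to this pair, which directly produces a set $I$ with $\card(I) = \mathfrak{c}$ and a linearly independent family $(F^k)_{k \in I}$ of dense vector subspaces of $X$ such that $\Span\bigl(\bigcup_{k \in I} F^k\bigr) \subseteq X \setminus Y \cup \{0\}$. This is exactly the first conclusion of the theorem.

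Finally, for the "in particular" clause, I would observe that since $F^k \subseteq \Span\bigl(\bigcup_{k \in I} F^k\bigr) \subseteq X \setminus Y \cup \{0\}$ for every $k \in I$, the family $(F^k)_{k \in I}$ satisfies the defining conditions of uncountably infinite algebraic genericity for $X \setminus Y$ in $X$.

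There is no real obstacle here, since all of the actual combinatorial and topological work has been done inside \cref{lem:alggen} and the verification of its hypotheses has been carried out in \cref{prop:agassumptions}. The only thing one has to be careful about is the quantifier matching, that is, checking that the hypothesis "$Y$ is a vector subspace of $X$" required in \cref{lem:alggen} is indeed met, which is automatic since every space in the chain \cref{chain} is a vector subspace of every larger one.
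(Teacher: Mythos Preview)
Your proposal is correct and matches the paper's approach exactly: the paper simply states that \cref{prop:agassumptions} and \cref{lem:alggen} together yield the theorem, without further argument. Your additional remarks about the ``in particular'' clause and the vector-subspace hypothesis are sound but go slightly beyond what the paper bothers to spell out.
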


The next theorem is also a consequence of \cref{prop:agassumptions} and \cref{lem:alggen}.

\begin{theorem}
\label{thm:second}
Let $Y,X$ be spaces of the chain \cref{chain} such that $Y \subsetneq X$ and $X \neq \ell^{\infty}$.
Then $X \setminus Y$ is maximal algebraically generic in $X$.
\end{theorem}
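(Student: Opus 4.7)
The plan is to observe that this theorem is essentially a direct corollary of the machinery already established. \Cref{lem:alggen} explicitly concludes, under its four structural assumptions on the pair $(Y, X)$, that $X \setminus Y$ is maximal algebraically generic in $X$; and \cref{prop:agassumptions} is precisely the statement that those four assumptions hold for every pair of spaces from the chain \cref{chain} with $Y \subsetneq X$ and $X \neq \ell^{\infty}$. So the proof amounts to chaining these two results.

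Concretely, I would first invoke \cref{prop:agassumptions} to guarantee that, for the given pair $Y \subsetneq X$ with $X \neq \ell^{\infty}$, all four hypotheses of \cref{lem:alggen} are satisfied: namely $c_{00} \subseteq Y \subseteq X \subseteq \C^{\N_0}$, the existence of elements of $X \setminus Y$ supported in any prescribed infinite subset of $\N_0$, the density of $c_{00}$ in $X$, and the stability of $Y$ under restriction to arbitrary subsets of indices. Then I would apply the last sentence of \cref{lem:alggen} to conclude that $X \setminus Y$ is maximal algebraically generic in $X$. No further construction is needed, since the dense vector subspace $F = \Span(\bigcup_{k \in I} F^k)$ produced inside \cref{lem:alggen} already satisfies $F \subseteq X \setminus Y \cup \{0\}$ and $\dim(F) = \dim(X)$, which is the definition of maximal algebraic genericity.

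There is essentially no obstacle here, since the heavy lifting has been done in \cref{lem:alggen}: the delicate linear independence argument and the dimension count $\mathfrak{c} = \card(I) \leq \dim(F) \leq \dim(X) \leq \card(\C^{\N_0}) = \mathfrak{c}$ are already inside that lemma. The only mild point worth noting in the write-up is that the conclusion \emph{maximal algebraically generic} is genuinely stronger than the first statement of \cref{thm:second}'s predecessor theorem, because it asserts $\dim(F) = \dim(X)$ and not merely that $F$ is dense; this strengthening is exactly what the dimension bound at the end of \cref{lem:alggen} provides. Accordingly, I expect the proof to occupy no more than two or three sentences.
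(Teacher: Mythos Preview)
Your proposal is correct and matches the paper's approach exactly: the paper simply states that \cref{thm:second} is a consequence of \cref{prop:agassumptions} and \cref{lem:alggen}, with no further argument given. Your explanation that \cref{prop:agassumptions} verifies the hypotheses and the final clause of \cref{lem:alggen} supplies the conclusion is precisely the intended reasoning.
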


Papathanasiou in \cite{PAP} demonstrated that $\ell^{\infty} \setminus c_{0}$ is maximal algebraically generic in $\ell^{\infty}$.
That is, there exists a dense subspace $F$ of $\ell^{\infty}$ with $\dim(F) = \dim(\ell^{\infty})$ such that $F \subseteq \ell^{\infty} \setminus c_{0} \cup \{0\}$.
If $Y$ is any space of the chain \cref{chain} such that $Y \subsetneq \ell^{\infty}$ then $Y \subseteq c_{0}$ and so $F \subseteq \ell^{\infty} \setminus Y \cup \{0\}$.
Thus $\ell^{\infty} \setminus Y$ is maximal algebraically generic in $\ell^{\infty}$.
Combining this observation with \cref{thm:second} proves the following theorem.

\begin{theorem}
Let $Y,X$ be spaces of the chain \cref{chain} such that $Y \subsetneq X$.
Then $X \setminus Y$ is maximal algebraically generic in $X$.
\end{theorem}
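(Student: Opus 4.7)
The plan is to split into cases according to whether $X = \ell^\infty$. In the case $X \neq \ell^\infty$, the conclusion is exactly the content of \cref{thm:second}, so no further work is required.

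In the remaining case $X = \ell^\infty$, the hypothesis $Y \subsetneq \ell^\infty$ combined with the total ordering of the chain \cref{chain} forces $Y \subseteq c_0$, since $c_0$ is the immediate predecessor of $\ell^\infty$ in that chain. I would then invoke Papathanasiou's theorem from \cite{PAP}, which supplies a dense vector subspace $F$ of $\ell^\infty$ with $\dim(F) = \dim(\ell^\infty)$ such that $F \subseteq \ell^\infty \setminus c_0 \cup \{0\}$. From $Y \subseteq c_0$ we get the inclusion $\ell^\infty \setminus c_0 \subseteq \ell^\infty \setminus Y$, and hence $F \subseteq \ell^\infty \setminus Y \cup \{0\}$. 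This exhibits $X \setminus Y$ as maximal algebraically generic in $X$.

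There is essentially no obstacle: both deep inputs have already been established, namely \cref{thm:second} for all cases with $X \neq \ell^\infty$ and Papathanasiou's result for the boundary case $X = \ell^\infty$. The only bookkeeping is the chain-theoretic observation that any $Y \subsetneq \ell^\infty$ appearing in \cref{chain} must sit inside $c_0$, which is immediate from inspecting the display.
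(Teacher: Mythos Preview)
Your proposal is correct and matches the paper's argument essentially verbatim: the paper also splits off the case $X = \ell^\infty$, invokes Papathanasiou's result together with the observation $Y \subseteq c_0$, and combines this with \cref{thm:second} for the remaining cases.
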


\section{Uncountably infinite spaceability}

We begin by stating and proving the key lemma of this section.

\begin{lemma}
\label{lem:space}
Let $X$ be a topological vector space and $Y$ be a subset of $X$ closed under scalar multiplication.
We assume the following.
\begin{enumerate}[(i)]
\item It is $c_{00} \subseteq Y \subseteq X \subseteq \C^{\N_0}$.
\item If $A \subseteq \N_0$ is infinite, then there exists $y \in X \setminus Y$ supported in $A$.
\item Convergence in $X$ implies pointwise convergence.
\item For every $x \in Y$ and $A \subseteq \N_0$, the sequence $x\mathds{1}_A$ belongs to $Y$.
\end{enumerate}
Then we have uncountably infinite spaceability for $X \setminus Y$ in $X$.
\end{lemma}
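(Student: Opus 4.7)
The plan is to mirror the construction in the proof of \cref{lem:alggen}, but to build closed infinite-dimensional subspaces instead of dense ones, and to omit the $c_{00}$ perturbations $x_j$ which are no longer needed. First, I apply \cref{lem:basic2} to fix a set $I$ with $\card(I) = \mathfrak{c}$ and a family $(A^k)_{k \in I}$ of countably infinite subsets of $\N_0$ with pairwise finite intersections whose union is $\N_0$. For each $k \in I$, I apply \cref{lem:basic1} to split $A^k$ into pairwise disjoint countably infinite subsets $(A^k_j)_{j=1}^{\infty}$, and invoke assumption \textit{(ii)} to choose, for each pair $(k,j)$, a sequence $y^k_j \in X \setminus Y$ supported in $A^k_j$. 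The candidate subspaces are then $F^k := \overline{\Span\{y^k_j : j \in \N\}}$, closure taken in $X$. Closedness is immediate, and infinite-dimensionality of $F^k$ follows because for fixed $k$ the $y^k_j$ are non-zero (since $0 \in c_{00} \subseteq Y$ while $y^k_j \notin Y$) and have pairwise disjoint supports, hence are linearly independent.

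The main content is to show $F^k \subseteq X \setminus Y \cup \{0\}$. Take any $v \in F^k \setminus \{0\}$ and pick a net $(v_\alpha)$ in $\Span\{y^k_j : j \in \N\}$ converging to $v$, written as $v_\alpha = \sum_j t^\alpha_j y^k_j$ with $t^\alpha_j = 0$ for $j$ sufficiently large depending on $\alpha$. By assumption \textit{(iii)}, $v_\alpha \to v$ pointwise, which immediately gives $\supp(v) \subseteq A^k$. For each $j$, fix any $n_j \in \supp(y^k_j) \subseteq A^k_j$; since the supports of the $y^k_i$ are pairwise disjoint, $v_\alpha(n_j) = t^\alpha_j y^k_j(n_j) \to v(n_j)$, forcing $t^\alpha_j \to c_j := v(n_j)/y^k_j(n_j)$, and consequently $v\mathds{1}_{A^k_j} = c_j y^k_j$. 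If $v$ belonged to $Y$, assumption \textit{(iv)} would give $c_j y^k_j = v\mathds{1}_{A^k_j} \in Y$ for every $j$; since $Y$ is closed under scalar multiplication, $c_j \neq 0$ would imply $y^k_j \in Y$, contradicting its choice. Hence $c_j = 0$ for every $j$, so $v$ vanishes on each $A^k_j$ and therefore on $A^k$, giving $v = 0$, a contradiction.

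Linear independence of the family $(F^k)_{k \in I}$ is then handled exactly as in the proof of \cref{lem:alggen}. Given a finite $J \subseteq I$ and $v_k \in F^k$ with $\sum_{k \in J} v_k = 0$, suppose $v_{k_0} \neq 0$ for some $k_0 \in J$. Each $v_k$ has support in $A^k$ by the pointwise-convergence argument just above, and the finitely many intersections $A^{k_0} \cap A^k$ with $k \in J \setminus \{k_0\}$ are all finite, so we can pick $N \in \N$ beyond which $A^{k_0}$ is disjoint from every other $A^k$. Then the relation $v_{k_0} = -\sum_{k \neq k_0} v_k$ forces $v_{k_0}$ to vanish on $A^{k_0} \cap [N,\infty)$, whence $\supp(v_{k_0}) \subseteq [0,N)$ and so $v_{k_0} \in c_{00} \subseteq Y$; combined with $F^{k_0} \subseteq X \setminus Y \cup \{0\}$ this yields $v_{k_0} = 0$, a contradiction.

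The main obstacle I anticipate is the extraction of the scalar coefficients $c_j$ from the approximating net in the absence of any assumed metric structure on $X$: $F^k$ is defined via the topological (not sequential) closure, so one must argue with nets, and the identification $v\mathds{1}_{A^k_j} = c_j y^k_j$ has to be read off pointwise rather than normwise. Assumption \textit{(iii)} does exactly this bookkeeping, converting convergence in $X$ into a pointwise statement in $\C$, after which the disjoint-support structure of the $y^k_j$ makes each coefficient visible at a single index.
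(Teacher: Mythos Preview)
Your proof is correct and follows essentially the same construction as the paper's: the same almost-disjoint family $(A^k)_{k\in I}$, the same disjoint refinements $(A^k_j)_{j\geq 1}$, the same generators $y^k_j$, and the same closed spans $F^k$, with the inclusion $F^k \subseteq X\setminus Y\cup\{0\}$ and the linear independence of $(F^k)_{k\in I}$ deduced in the same way from pointwise convergence and the support bookkeeping. Your use of nets rather than sequences is in fact slightly more careful than the paper (which passes to a sequence without having assumed $X$ metrizable), and your endgame for the inclusion step---showing all $c_j=0$ and hence $v=0$ rather than first locating one $c_{j_0}\neq 0$---is a cosmetic reformulation of the same argument.
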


\begin{proof}
By \cref{lem:basic2} there exists a set $I$ with $\card(I) = \mathfrak{c}$ and a family  $(A^k)_{k\in I}$ of countably infinite subsets of $\N_0$ with pairwise finite intersections such that $\N_0 = \bigcup_{k\in I}{A^k}$.
For every $k \in I$, by \cref{lem:basic1}, there exists a family $(A_j^k)_{j=1}^{\infty}$ of countably infinite pairwise disjoint subsets of $A^k$ such that $A^k = \bigcup_{j=1}^{\infty}{A_j^k}$.
We now fix $k \in I$.
For every $j \in \N$, by assumption \textit{(ii)}, there exists $y_j^k \in X \setminus Y$ supported in $A_j^k$.
We define $F^k = \cl_X(\Span \{y_j^k: j \in \N \} )$.
Then $F^k$ is a closed linear subspace of $X$.
Furthermore $F^k$ is infinite-dimensional because the set $\{y_j^k : j \in \N\}$ is linearly independent as the supports of its elements are pairwise disjoint.

Next, we show that $F^k \subseteq X \setminus Y \cup \{0\}$.
Indeed, let $f \in F^k$ be arbitrary such that $f \neq 0$.
Clearly $f \in X$ so we must show that $f \not\in Y$.
Since $f \in F^k$ there exists a sequence $(f_h)_{h=1}^{\infty}$ in $\Span\{y^k_j: j \in \N \}$ such that $f_h \to f$ in $X$.
For each $h \in \N$ we can write $$f_h = \sum_{j=1}^{\infty}{c_{h,j} y^k_j}$$
with the set $\{ j \in \N : c_{h,j} \neq 0 \}$ being finite.
This implies that
$$ \supp(f_h) \subseteq \bigcup_{j=1}^{\infty}{\supp(y^k_j)} \subseteq \bigcup_{j=1}^{\infty}{A^k_j} = A^k$$
for all $h \in \N$.
By assumption \textit{(iii)} and the convergence $f_h \to f$ in $X$ it follows that $f_h(n) \to f(n)$ for all $n \in \N_0$.
Thus
$$\supp(f) \subseteq \bigcup_{h=1}^{\infty}{\supp(f_h)} \subseteq \bigcup_{j=1}^{\infty}{A^k_j} = A^k,$$
because if for some $n \in \N_0$ we have for all $h \in \N$ that $f_h(n) = 0$, then $f(n) = 0$ as it is $f_h(n) \to f(n)$.
Now, because $f \neq 0$ there exists $n_0 \in \N_0$ such that $f(n_0) \neq 0$ and so $n_0 \in \bigcup_{j=1}^{\infty}{A^k_j}$.
Hence there exists some $j_0 \in \N$ such that $n_0 \in A^k_{j_0}$.
Since each sequence $y_j^k$ is supported in the set $A_j^k$ with the sets $(A_j^k)_{j=1}^{\infty}$ being pairwise disjoint, for all $n \in A_{j_0}^k$ and all $h \in \N$ we have that $f_h(n) = c_{h,j_0}y_{j_0}^k(n)$.
If it was $y_{j_0}(n_0) = 0$ then we would have that $f_h(n_0) = 0$ for all $h \in \N$ which is not possible as
$$n_0 \in \supp(f) \subseteq \bigcup_{h=1}^{\infty}{\supp(f_h)}.$$
Therefore $y_{j_0}^k(n_0) \neq 0$.
Observe that since $f(n_0) = \lim_{h\to \infty}{f_h(n_0)}$ and $f(n_0) \neq 0$ we have
$$\lim_{h \to \infty}{\frac{f_h(n_0)}{y_{j_0}^k(n_0)}} = \frac{f(n_0)}{y_{j_0}^k(n_0)} \neq 0.$$
On the other hand, for all $h \in \N$ it is $f_h(n_0) = c_{h,j_0}y_{j_0}^k(n_0)$ and thus
$$\lim_{h \to \infty}{c_{h,j_0}} = \lim_{h \to \infty}{\frac{f_h(n_0)}{y_{j_0}^k(n_0)}} = \frac{f(n_0)}{y_{j_0}^k(n_0)} \neq 0.$$
Let $c_{j_0} = \lim_{h \to \infty}{c_{h,j_0}}$.
Then $c_{j_0} \neq 0$ and for all $n \in A_{j_0}^k$ we have
$$f(n) = \lim_{h \to \infty}{f_h(n)} = \lim_{h\to \infty}{c_{h,j_0}y_{j_0}^k(n)} = c_{j_0}y_{j_0}^k(n).$$
This means that $f\mathds{1}_{A_{j_0}^k} = c_{j_0}y_{j_0}^k$.
Thus $f\mathds{1}_{A_{j_0}^k} \not\in Y$.
Indeed, if it were $f\mathds{1}_{A_{j_0}^k} \in Y$ that would imply that $c_{j_0}y_{j_0}^k \in Y$ and $Y$ being closed under scalar multiplication while $c_{j_0} \neq 0$ would in turn imply that $y_{j_0}^k \in Y$ which is not possible since we chose the sequence $y_{j_0}^k$ such that $y_{j_0}^k \in X \setminus Y$.
By the contrapositive of assumption \textit{(iv)} it follows that $f \not\in Y$ as wanted.

Finally, it remains to be shown that the family $(F^k)_{k \in I}$ of closed infinite-dimensional vector subspaces of $X$ contained in $X \setminus Y$ is linearly independent.
To this end, let $J$ be a finite subset of $I$ and $v_k \in F^{k}$ for every $k \in I$ be such that $\sum_{k \in J}{v_k}= 0$.
Suppose by contradiction that there exists some $k_0 \in J$ such that $v_{k_0} \neq 0$.
The set $A^{k} \cap A^{k_0}$ is finite for every $k \in J \setminus \{k_0\}$ and so, as the set $J$ is finite, there exists some natural number $N$ such that for every  natural number $n \geq N$ with $n \in A^{k_0}$ we have $n \not\in A^{k}$ for all $k \in J \setminus \{k_0\}$.
We have shown in the previous paragraph that if $f \in F^k$ for some $k \in I$, then $\supp(f) \subseteq A^k$.
Therefore $\supp(v_k) \subseteq A^{k}$ for every $k \in J.$
It follows that if for every natural number $n \geq N$ with $n \in \supp(v_{k_0})$ we have $n \not\in \supp(v_k)$ for all $k \in J \setminus \{k_0\}$.
Equivalently, for every  natural number $n \geq N$ with $v_{k_0}(n) \neq 0$ we have $v_k(n) = 0$ for all $k \in J \setminus \{k_0\}$.
Because $v_{k_0} \neq 0$ and $F^{k_0} \subseteq X \setminus Y \cup \{0\}$ we deduce that $v_{k_0} \not\in Y$.
By assumption \textit{(i)} it is $c_{00}\subseteq Y$ and so $v_{k_0} \not\in c_{00}$.
Thus there exists a natural number $n_0 \geq N$ such that $v_{k_0}(n_0) \neq 0$.
But then our observation above and the assumption that $\sum_{k \in J}{v_j} = 0$ imply that
$$0 = \left(\sum_{k \in J}{v_k}\right)(n_0) = \sum_{k \in J}{v_k(n_0)} = v_{k_0}(n_0),$$
which is absurd.

\end{proof}

The proposition that follows ensures that we can apply the previous lemma to the chain in which we are interested.

\begin{proposition}
\label{prop:spassumptions}
If $Y,X$ are spaces of the chain \cref{chain} such that $Y \subsetneq X$, then the assumptions of \cref{lem:space} are satisfied.
\end{proposition}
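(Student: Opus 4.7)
The hypotheses \textit{(i)}, \textit{(ii)}, \textit{(iv)} of \cref{lem:space} are identical to those of \cref{lem:alggen}, so \cref{prop:agassumptions} already dispatches them for every pair with $X \neq \ell^{\infty}$. My plan is therefore to handle the remaining case $X = \ell^{\infty}$ for those three conditions, and to verify the new condition \textit{(iii)}, that convergence in $X$ implies pointwise convergence, for every $X$ appearing in the chain.

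For the pairs with $X = \ell^{\infty}$: any $Y$ with $Y \subsetneq \ell^{\infty}$ in the chain lies in $c_0$, since $c_0$ is the immediate predecessor of $\ell^{\infty}$, which gives \textit{(i)}. For \textit{(ii)} the indicator $\mathds{1}_A$ of an infinite $A \subseteq \N_0$ lies in $\ell^{\infty} \setminus c_0 \subseteq \ell^{\infty} \setminus Y$ and is supported in $A$. For \textit{(iv)} a short case-by-case check suffices: for each candidate $Y$, multiplication by $\mathds{1}_A$ cannot increase $\abs{x(n)}$ at any coordinate, which preserves membership in the $\ell^p$-type spaces, in $c_0$, and in $H(\D)$; for $A^{\infty}(\D)$ one notes that $n^k x(n)\mathds{1}_A(n) \to 0$ whenever $n^k x(n) \to 0$; and the intersection spaces $\bigcap_{q>p}\ell^q$ inherit the property coordinate by coordinate.

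For condition \textit{(iii)} I would go through the chain space by space. The topologies on $\C^{\N_0}$ and $A^{\infty}(\D)$ are product topologies, for which convergence is pointwise by definition. For $\ell^p$ with $0 < p \leq \infty$ the metric $d^p$ coordinatewise dominates $\abs{x(n)-y(n)}$ up to a fixed power, so $d^p$-convergence forces pointwise convergence; for $\bigcap_{q>p}\ell^q$ with $\delta^p$, convergence is equivalent to $d^q$-convergence for every $q > p$ by the remark in the preliminaries, and hence is again pointwise. The only step that requires more than unpacking definitions is $H(\D)$: uniform convergence on compact subsets of $\D$ forces convergence of each Taylor coefficient, since by Cauchy's integral formula the $n$-th coefficient is a continuous linear functional on $H(\D)$, and in the sequence-space identification this is precisely pointwise convergence. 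Combining these verifications with \cref{prop:agassumptions} covers every pair $Y \subsetneq X$; I expect the $H(\D)$ case of \textit{(iii)} to be the only step that is not a purely routine unravelling of the definitions.
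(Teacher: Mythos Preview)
Your proposal is correct and follows exactly the paper's strategy: reduce \textit{(i)}, \textit{(ii)}, \textit{(iv)} to \cref{prop:agassumptions} when $X \neq \ell^\infty$, handle $X = \ell^\infty$ separately, and verify \textit{(iii)} space by space. The paper is simply terser, dismissing the $X=\ell^\infty$ checks as ``easily verified'', calling the $\ell^p$/$c_0$/$\C^{\N_0}$ cases of \textit{(iii)} ``standard'', and citing \cite{BIG} for $H(\D)$ and $A^\infty(\D)$---whereas you spell out the $\ell^\infty$ checks and give the Cauchy-formula argument for $H(\D)$ directly (and correctly note that $A^\infty(\D)$ is trivial since the paper defines it as a metric subspace of $\C^{\N_0}$).
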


\begin{proof}
Notice that assumptions \textit{(i),(ii)} and \textit{(iv)} are the same with \cref{lem:alggen} and therefore we have already seen that they are satisfied if $X \neq \ell^{\infty}$.
On the other hand, if $X = \ell^{\infty}$ assumptions \textit{(i),(ii)} and \textit{(iv)} are easily verified.
Concerning assumption \textit{(iii)}, for the cases $X = H(\D)$ and $X = A^{\infty}(\D)$ one should look at Propositions 2.5 and 2.6 in \cite{BIG} respectively.
The remaining cases are standard.
\end{proof}

Combining \cref{lem:space} with \cref{prop:spassumptions} we obtain the main result of this section.

\begin{theorem}
Let $Y,X$ be spaces of the chain \cref{chain} such that $Y \subsetneq X$.
Then we have uncountably infinite spaceability for $X \setminus Y$ in $X$.
\end{theorem}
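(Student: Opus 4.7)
The plan is simply to feed the pair $(Y,X)$ into \cref{lem:space} via \cref{prop:spassumptions}. Concretely, given any pair $Y \subsetneq X$ from the chain \cref{chain}, I would first invoke \cref{prop:spassumptions} to conclude that all four hypotheses of \cref{lem:space} hold: namely, $c_{00} \subseteq Y \subseteq X \subseteq \C^{\N_0}$ (hypothesis \textit{(i)}), every infinite $A \subseteq \N_0$ supports some $y \in X \setminus Y$ (hypothesis \textit{(ii)}), convergence in $X$ implies pointwise convergence (hypothesis \textit{(iii)}), and $Y$ is stable under multiplication by characteristic functions of subsets of $\N_0$ (hypothesis \textit{(iv)}). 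I would also note in passing that $Y$, being a space from the chain, is in particular a vector subspace of $X$, so it is certainly closed under scalar multiplication, which is the only structural assumption \cref{lem:space} imposes on $Y$ beyond the four listed ones.

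Once the hypotheses are verified, \cref{lem:space} directly yields a set $I$ with $\card(I) = \mathfrak{c}$ and a linearly independent family $(F^k)_{k \in I}$ of closed infinite-dimensional vector subspaces of $X$ with each $F^k \subseteq X \setminus Y \cup \{0\}$. By definition this is precisely uncountably infinite spaceability for $X \setminus Y$ in $X$, which is the desired conclusion.

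There is essentially no obstacle at this stage: both the lemma and the proposition have already been established, so the theorem is an immediate corollary. The only point that might deserve a sentence of emphasis is that, unlike the algebraic-genericity theorem, here the case $X = \ell^{\infty}$ is included, since \cref{prop:spassumptions} explicitly covers it (the proposition remarks that hypotheses \textit{(i)}, \textit{(ii)}, \textit{(iv)} are easy for $\ell^{\infty}$, and hypothesis \textit{(iii)} is standard since $d^{\infty}$-convergence is uniform convergence, hence pointwise).
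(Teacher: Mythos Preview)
Your proposal is correct and matches the paper's approach exactly: the paper simply states that combining \cref{lem:space} with \cref{prop:spassumptions} yields the theorem. Your additional remarks about $Y$ being closed under scalar multiplication and about the $X = \ell^{\infty}$ case being covered are accurate elaborations of points the paper leaves implicit.
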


\section*{\fontsize{11}{15}\selectfont Acknowledgements}
The author would like to express his gratitude towards I. Deliyanni for her valuable suggestions.
The author would also like to thank V. Nestoridis for his interest in this work and his advice concerning this paper.

\printbibliography
\bigskip
\noindent
C. A. Konidas\\
National and Kapodistrian University of Athens\\
Department of Mathematics\\
e-mail address: \href{mailto:xkonidas@gmail.com}{\tt xkonidas@gmail.com}

\end{document}